\documentclass{article}

\usepackage{amsmath}
\usepackage{amsfonts,amssymb}
\usepackage{amsthm}
\usepackage{amscd}

\usepackage{enumitem}
\usepackage{graphicx}%

\usepackage{nth}

\theoremstyle{plain} \numberwithin{equation}{section}
\newtheorem{theorem}{Theorem}[section]

\newtheorem{lemma}[theorem]{Lemma}
\newtheorem{proposition}[theorem]{Proposition}
\theoremstyle{definition}

\newtheorem{remark}[theorem]{Remark}

\textwidth=125mm  
\textheight=195mm

\begin{document}
	\title{Spacing properties of the zeros of orthogonal polynomials on Cantor sets via a sequence of polynomial mappings}
	\author{G\"{o}kalp Alpan\footnote{The author is supported by a grant from T\"{u}bitak: 115F199.}\\
	Department of Mathematics\\
		Bilkent University\\
		06800, \c{C}ankaya\\
		Ankara, Turkey\\ e-mail: \texttt{gokalp@fen.bilkent.edu.tr}\\}
	
\date{}

	\maketitle
\begin{abstract}
Let $\mu$ be a probability measure with an infinite compact support on $\mathbb{R}$. Let us further assume that $(F_n)_{n=1}^\infty$ is a sequence of orthogonal polynomials for $\mu$ where $(f_n)_{n=1}^\infty$  is a sequence of nonlinear polynomials and $F_n:=f_n\circ\dots\circ f_1$ for all $n\in\mathbb{N}$. We prove that if there is an $s_0\in\mathbb{N}$ such that $0$ is a root of $f_n^\prime$ for each $n>s_0$ then the distance between any two zeros of an orthogonal polynomial for $\mu$ of a given degree greater than $1$ has a lower bound in terms of the distance between the set of critical points and the set of zeros of some $F_k$. Using this, we find sharp bounds from below and above for the infimum of distances between the consecutive zeros of orthogonal polynomials  for singular continuous measures. 
\end{abstract}
\smallskip
\noindent \textbf{Keywords.} zero spacing, singular continuous measures, orthogonal polynomials.

\smallskip
\noindent \textbf{Mathematics Subject Classification (2010).} 42C05, 31A15.

\section{Introduction}
In the last ten years, there has been an explosion of interest in spacing of the zeros of orthogonal polynomials on the real line. For probability (unit Borel) measures having a non-trivial absolutely continuous part (with respect to the Lebesgue measure on $\mathbb{R}$), there are many results (see e.g. \cite{avila,last,levlub,sim1,totiksze,varga}) concerning the fine structure of the zeros of orthogonal polynomials. Simon-Kruger, in \cite{kruger}, discuss the zero spacing of the orthogonal polynomials for the Cantor-Lebesgue measure of the Cantor ternary set. To our knowledge, there was no prior work except \cite{kruger} on the structure of zeros for the purely singular continuous measure case. Here, our main aim is to give some examples of singular continuous measures for which the minimal distance between the consecutive zeros of the associated orthogonal polynomials can be approximated accurately. 

Throughout, measures that we consider are probability measures, unless specified otherwise, with a compact support in $\mathbb{C}$ and we set $\mathbb{N}=\{1,2,\dots\}$ and $\mathbb{N}_0=\mathbb{N}\cup \{0\}$. Let $\mu$ be a measure with $\mathrm{Card(supp(\mu))}\geq n>1$ for some $n\in\mathbb{N}$. Then for each $m\in\mathbb{N}$ with $m\leq n-1$, the monic polynomial $P_m(\cdot; \mu)$ of degree $m$ satisfying $$\displaystyle \|P_m(\cdot;\mu)\|_{L_2(\mathbb{C},\mu)}^2= \inf_{Q_m \in \mathcal{P}_m}\int |Q_m(z)|^2\,d\,\mu(z)>0,$$ is called the $m$-th monic orthogonal polynomial for $\mu$.
Here, $\| \cdot \|_{L_2(\mathbb{C},\mu)}$ is the standard norm in $L_2(\mathbb{C},\mu)$ and $\mathcal{P}_m$ is the set of all monic polynomials of degree $m$.

Let us suppose that $\mu$ is a measure with an infinite support on $\mathbb{R}$. If we let $P_{-1}(x;\mu)\equiv 0$ and $P_{0}(x;\mu)\equiv 1$ then there are two sequences $(a_n)_{n=1}^\infty$ and $(b_n)_{n=1}^\infty$ such that for $n\geq 0$ we have  $$ x P_n(x;\mu)= P_{n+1}(x;\mu)+b_{n+1} P_n(x;\mu)+a_n^2 P_{n-1}(x;\mu)$$ where $a_{n+1}>0$ and $b_{n+1}\in\mathbb{R}$. The coefficients  $(a_n, b_n)_{n=1}^\infty$ are called the recurrence coefficients associated with $\mu$. Both $(a_n)_{n=1}^\infty$ and $(b_n)_{n=1}^\infty$ are bounded sequences. Conversely, if we are given $(a_n, b_n)_{n=1}^\infty$ where $(a_n)_{n=1}^\infty$ and $(b_n)_{n=1}^\infty$ are bounded sequences with $a_n>0$ and $b_n\in\mathbb{R}$, then as a result of the spectral theorem there is a unique measure $\mu$ such that the associated recurrence coefficients are $(a_n, b_n)_{n=1}^\infty$. For a deeper discussion of the theory of orthogonal polynomials we refer the reader to \cite{Sim3,vanassche}.

The plan of the paper is as follows. In Section 2, we briefly summarize recent results from \cite{alp1,alp2} and well-known facts on the orthogonal polynomials associated with discrete measures. In Section 3, we discuss spacing of the zeros of orthogonal polynomials for fairly general measures. In the last section, we focus on the zero spacing of orthogonal polynomials for the equilibrium measure of the Cantor set $K(\gamma)$ which was introduced in \cite{gonc}.

\section{Preliminaries}
For the basic concepts of potential theory, we refer the reader to \cite{Ransford}. Convergence of measures is considered in the weak star topology. For a compact set $K\subset\mathbb{C}$ with the logarithmic capacity $\mathrm{Cap}(K)>0$, we denote the equilibrium measure of $K$ by $\rho_K$. 

Let $(f_n)_{n=1}^\infty$ be a sequence of nonlinear polynomials. Throughout, for each $n\in\mathbb{N}$ we use the following notation: $f_n(z)=\sum_{j=0}^{d_n}a_{n,j}\cdot z^j$ where $d_n\geq 2$, $a_{n,j}\in\mathbb{C}$ for $j=0,\dots,d_n$ and $a_{n,d_n}\neq 0$. The composition $f_n\circ f_{n-1}\dots \circ f_1$ will be denoted by $F_n$ and $\tau_n$ is used to denote the leading coefficient of $F_n$. The normalized counting measure on the roots (counting multiplicity) of $F_n(z)-a=0$ is denoted by $\nu_n^a$ where $a\in\mathbb{C}$. 

The next result is a more general version of Theorem 3.3 in \cite{alp2} and implies Theorem 2.8 of \cite{alp1}. In these theorems the limit sequence is the equilibrium measure of some prescribed set. 

\begin{theorem}
Let $(f_n)_{n=1}^\infty$ be a sequence of nonlinear polynomials. Suppose further that there is an $a\in\mathbb{C}$ such that $\nu_n^a\rightarrow \mu$ as $n\rightarrow\infty$ where $\mu$ is a probability measure and $\mathrm{supp}(\mu)$ is an infinite compact set in $\mathbb{C}$. Then we have the following identities:
	\begin{enumerate}[label={(\alph*})]
		\item $\displaystyle P_1(z;\mu)=z+ \frac{1}{d_1}\frac{a_{1, d_1-1}}{ a_{1,d_1}}.$
		\item $\displaystyle P_{d_1\dots d_l}(z;\mu)=\frac{1}{\tau_l}\left(F_l(z)+\frac{1}{d_{l+1}}\frac{a_{l+1,d_{l+1}-1}}{a_{l+1,d_{l+1}}}\right)$ for all $l\in\mathbb{N}$.
	\end{enumerate}
\end{theorem}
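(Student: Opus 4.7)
The plan is to prove both identities by working with the discrete approximants $\nu_n^a$, computing moments via Vieta's formulas on $F_n(z)-a$, and passing the resulting polynomial identities through the weak-$*$ limit. Each $\nu_n^a$ is the uniform counting measure on the $D_n:=d_1\cdots d_n$ roots (with multiplicity) of $F_n(z)=a$, so any polynomial integral $\int Q\,d\nu_n^a$ becomes a symmetric function of those roots. Once such a polynomial identity holds against every $\nu_n^a$, it descends to $\mu$, because $\mathrm{supp}(\nu_n^a)$ eventually lies in a fixed compact neighborhood of $\mathrm{supp}(\mu)$, so that polynomial test functions reduce to bounded continuous ones.

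\textbf{Part (a).} I compute the first moment. Vieta gives $\int z\,d\nu_n^a = -c_n/(\tau_n D_n)$, where $c_n$ is the coefficient of $z^{D_n-1}$ in $F_n$. Writing $F_n=f_n\circ F_{n-1}$ and expanding, only the top two terms of $F_{n-1}$ contribute to $z^{D_n-1}$, giving the recursion $c_n = d_n\,a_{n,d_n}\,\tau_{n-1}^{d_n-1}\,c_{n-1}$ for $n\ge 2$. Combined with $\tau_n=a_{n,d_n}\tau_{n-1}^{d_n}$, this simplifies to $c_n/\tau_n = d_n\,c_{n-1}/\tau_{n-1}$, which telescopes to $c_n/(\tau_n D_n) = a_{1,d_1-1}/(d_1\,a_{1,d_1})$, independent of $n$ and $a$. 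Passing to the limit, $\int z\,d\mu = -\frac{1}{d_1}\frac{a_{1,d_1-1}}{a_{1,d_1}}$, which is (a).

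\textbf{Part (b).} Setting $c_{l+1}:=\frac{1}{d_{l+1}}\frac{a_{l+1,d_{l+1}-1}}{a_{l+1,d_{l+1}}}$, the candidate $\frac{1}{\tau_l}(F_l+c_{l+1})$ is manifestly monic of degree $D_l$; it suffices to verify $\int(F_l+c_{l+1})Q\,d\mu=0$ for every polynomial $Q$ with $\deg Q<D_l$. I test the identity against each $\nu_n^a$ with $n>l$. The roots of $F_n(z)=a$ foliate: each $\zeta$ satisfies $F_l(\zeta)=w$ for some $w$ solving $(f_n\circ\cdots\circ f_{l+1})(w)=a$. Consequently,
\begin{align*}
\int F_l\,Q\,d\nu_n^a &= \frac{1}{D_n}\sum_{w}\,w\,S_Q(w), & \int Q\,d\nu_n^a &= \frac{1}{D_n}\sum_{w}\,S_Q(w),
\end{align*}
where $S_Q(w):=\sum_{F_l(\zeta)=w}Q(\zeta)$. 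The crux is that $S_Q(w)$ is constant in $w$: by Newton's identities it is a polynomial in the elementary symmetric functions $e_1,\dots,e_{\deg Q}$ of the roots of $F_l(z)-w$, and each $e_k$ for $k\le D_l-1$ equals $(-1)^k$ times the coefficient of $z^{D_l-k}$ in $\tau_l^{-1}(F_l(z)-w)$, which is independent of $w$ since only the constant term of $F_l-w$ absorbs $w$. Factoring $S_Q$ out and applying part (a) to the tail sequence $(f_{l+1},f_{l+2},\dots)$, one gets $\sum_w w = -\frac{D_n}{D_l\,d_{l+1}}\frac{a_{l+1,d_{l+1}-1}}{a_{l+1,d_{l+1}}}$ and $\sum_w 1 = D_n/D_l$. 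Substituting yields $\int(F_l+c_{l+1})Q\,d\nu_n^a=0$, and the passage to the limit transfers the vanishing to $\mu$.

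\textbf{Main obstacle.} The combinatorial heart is the $w$-independence of $S_Q(w)$, which is elementary but demands careful bookkeeping through Newton's identities and the composition structure of $F_n$. The only non-algebraic step is extending weak-$*$ convergence to polynomial test functions; this is standard once the supports $\mathrm{supp}(\nu_n^a)$ are seen to lie in a common compact set, but it is worth stating explicitly, since the rest of the proof is pure algebra.
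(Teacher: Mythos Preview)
Your algebraic work is correct: the Vieta computation pins down the first moment of every $\nu_n^a$, and the Newton-identity argument showing $S_Q(w)$ is independent of $w$ (because only the constant term of $F_l-w$ depends on $w$, hence only $e_{D_l}$ does, while $p_1,\dots,p_{D_l-1}$ involve just $e_1,\dots,e_{D_l-1}$) cleanly yields $\int(F_l+c_{l+1})\,Q\,d\nu_n^a=0$ for all $n>l$ and $\deg Q<D_l$. The paper gives no self-contained argument---it simply cites Theorem~3.3 of \cite{alp2} and says to replace the equilibrium measure of $J_{(f_n)}$ by $\mu$---so there is no detailed method here to compare against; your route is almost certainly the one carried out in \cite{alp2}.

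The gap is in the final step. You assert that the supports $\mathrm{supp}(\nu_n^a)$ ``are seen to lie in a common compact set'', but weak-$*$ convergence to a compactly supported probability measure does \emph{not} force this: take $\nu_n=(1-\tfrac1n)\delta_0+\tfrac1n\delta_n\to\delta_0$, where $\int z\,d\nu_n\equiv 1\neq 0=\int z\,d\delta_0$. So convergence of polynomial integrals need not follow from the hypotheses as literally stated. In the generalized-Julia-set framework of \cite{alp2} the roots of $F_n(z)=a$ sit in a fixed compact set by construction, and that is what the paper's one-line reference is tacitly leaning on; under the bare hypotheses of the present theorem you must either add uniform boundedness of $\bigcup_n\mathrm{supp}(\nu_n^a)$ as an explicit assumption, or supply an argument for it (for instance via a positive lower bound on $|\tau_n|^{1/D_n}$, which controls the modulus of every root of $F_n(z)=a$). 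Once that point is secured, your proof is complete.
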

\begin{proof} The proof is almost same with the proof of Theorem 3.3 in \cite{alp2}. In the proof of Theorem 3.3 omit the first line, replace the equilibrium measure of $J_{(f_n)}$ by $\mu$ where it is necessary. Then we have the proof of this theorem.
\end{proof}

In the last section, we focus on a concrete family of measures but the techniques used in the last two sections are applicable to some extent for many other measures supported on $\mathbb{R}$ provided that the associated orthogonal polynomials satisfy $(a)$ and $(b)$ of the above theorem. For a systematic way to construct such measures we refer the reader to Section $4$ in \cite{alp2}.

If $\mu$ is a measure with an infinite support on $\mathbb{R}$ then the zeros of $P_n(\cdot;\mu)$ are simple and real. We enumerate the zeros $(x_{j,n}(\mu))_{j=1}^n$ of $P_n(\cdot;\mu)$ so that they satisfy 
\begin{equation*}
x_{1,n}(\mu)<x_{2,n}(\mu)<\dots<x_{n,n}(\mu).
\end{equation*}
Define $x_{0,n}(\mu)$ as the leftmost point and $x_{n+1,n}(\mu)$ as the rightmost point of $\mathrm{supp}(\mu)$, respectively. Then (see e.g. (2) in p. 358 of \cite{denis} and Theorem 2.3 in \cite{vanassche}), for $1\leq i \leq n$, $x_{i,n}(\mu)\in (x_{0,n}, x_{n+1,n})$. The next theorem (see for example Proposition 1.10 in \cite{baik} for a proof of it) will be used many times in the subsequent sections.

\begin{theorem}\label{ddd}
Let $\lambda$ be a measure with  $\mathrm{supp}(\lambda)=\{c_{i,r}\}_{i}\subset\mathbb{R}$ where $r\in\mathbb{N}$ with $r>1$ and $i=1,\dots, r$ provided that $c_{1,r}<c_{2,r}<\dots<c_{r,r}$. Then, the zeros of $P_s{(\cdot;\lambda)}$ lie in $(c_{1,r},c_{r,r})$ and they are real and simple where $1\leq s<r$. Moreover, in each interval $[c_{j,r},c_{j+1,r}]$ there is at most one zero of $P_s{(\cdot;\lambda)}$ where $j\in\{1,\dots,r-1\}$. 
\end{theorem}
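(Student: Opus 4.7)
The plan is to verify all stated conclusions (the zeros are real, simple, strictly inside $(c_{1,r},c_{r,r})$, and at most one lies in each gap $[c_{j,r},c_{j+1,r}]$) by one unified mechanism: contradict the orthogonality $\int P_s(\cdot;\lambda)\,Q\,d\lambda=0$ (valid for $\deg Q<s$) by exhibiting a quotient of $P_s$ by a chosen factor that gives an integrand of constant sign on $\mathrm{supp}(\lambda)$. The key preliminary observation is $\|P_s(\cdot;\lambda)\|^2_{L_2(\mathbb{C},\lambda)}>0$: since $s<r=\mathrm{Card}(\mathrm{supp}(\lambda))$, no monic polynomial of degree $s$ can vanish on all $r$ atoms, so $P_s(\cdot;\lambda)$ in particular does not, and thus standard orthogonality holds.

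First I would exclude non-real zeros. If $\alpha\notin\mathbb{R}$ is a zero, so is $\bar\alpha$, and $R:=P_s/((x-\alpha)(x-\bar\alpha))$ is a real polynomial of degree $s-2$. Then $\int P_s R\,d\lambda=\int P_s^2/|x-\alpha|^2\,d\lambda$, which is a sum of nonnegative terms over atoms and is strictly positive since $P_s$ cannot vanish on all atoms, contradicting orthogonality. The same template with $R=P_s/(x-\alpha)$ of degree $s-1$ excludes a real zero $\alpha\leq c_{1,r}$ or $\alpha\geq c_{r,r}$: the integrand $P_s^2/(x-\alpha)$ has a definite sign on the support and is nonzero at some atom. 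Finally, $R=P_s/(x-\alpha)^2$ of degree $s-2$ rules out any real zero of multiplicity $\geq 2$ by the same positivity argument.

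For the interlacing with the support, suppose two zeros $\alpha<\beta$ of $P_s$ lie in $[c_{j,r},c_{j+1,r}]$. Factor $P_s(x)=(x-\alpha)(x-\beta)\tilde P(x)$ with $\deg\tilde P=s-2<s$, and test orthogonality with $R=\tilde P$: one gets $\int P_s R\,d\lambda=\int(x-\alpha)(x-\beta)\tilde P^2\,d\lambda$. For every atom $c_{i,r}$ the factor $(c_{i,r}-\alpha)(c_{i,r}-\beta)$ is nonnegative, precisely because no atom lies strictly between $c_{j,r}$ and $c_{j+1,r}$, hence none lies strictly between $\alpha$ and $\beta$. The integral is therefore a sum of nonnegative terms forced to equal $0$ by orthogonality, which makes $\tilde P$ vanish on every atom outside $\{\alpha,\beta\}$, producing at least $r-2$ zeros of a polynomial of degree $s-2\leq r-3$; contradiction.

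The main technical nuisance is the case analysis at the boundary of the subcase above: $\alpha$ or $\beta$ may or may not coincide with one of the atoms $c_{j,r}$, $c_{j+1,r}$. In each subcase one must check that enough atoms contribute strictly to the sum so that the forced vanishing of $\tilde P$ overruns its degree. This counting is routine once one tracks whether $0$, $1$, or $2$ of the atoms are ``used up'' by coincidences with $\alpha$ or $\beta$.
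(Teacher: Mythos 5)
The paper does not give its own proof of this theorem; it simply cites Proposition~1.10 of the Baik--Kriecherbauer--McLaughlin--Miller monograph. Your argument is the standard, self-contained orthogonality sign-argument for zeros of discrete orthogonal polynomials, and it is correct. The preliminary observation that $s<r$ forces $\|P_s(\cdot;\lambda)\|_{L_2(\lambda)}>0$ is exactly what is needed to get genuine orthogonality to all lower-degree polynomials, and each of your four test functions $R$ has degree $<s$ with $P_sR$ of one sign on $\mathrm{supp}(\lambda)$, which is the whole game. One small remark: the ``technical nuisance'' you flag in the last paragraph is in fact no nuisance. Regardless of whether $\alpha$ or $\beta$ coincides with an endpoint $c_{j,r}$ or $c_{j+1,r}$, the factor $(c_{i,r}-\alpha)(c_{i,r}-\beta)$ can vanish at most at two atoms, so $\tilde P$ is forced to vanish at $\geq r-2$ atoms while $\deg\tilde P=s-2\leq r-3$; the contradiction is uniform across the subcases you enumerate. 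Also note that the ``at most one zero per gap'' clause is vacuous for $s=1$, so you may implicitly assume $s\geq 2$ there without loss. Overall, your proof fills in what the paper leaves to a reference, and does so by the same mechanism one would expect that reference to use.
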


We can reduce the infinite support case to the finite case by a classical technique. By doing that, we can use results such as Theorem \ref{ddd} which are valid for discrete measures. Assume that $\mu$ is a measure with an infinite support on $\mathbb{R}$ and let $r\in\mathbb{N}$. Then there is a unique measure $\mu^{(r)}$ with $\mathrm{supp}(\mu^{(r)})=\{x:\,P_r(x;\mu)=0\}$ such that for any polynomial $\pi$ with $\deg \pi\leq 2r-1$ we have
\begin{equation*}\label{ere}
\int \pi(x)\,d\mu(x)=\int\pi(x)\, d\mu^{(r)}(x).
\end{equation*} In particular,
\begin{equation}\label{gauss}
P_{s}(\cdot;\mu)= P_s(\cdot;\mu^{(r)})
\end{equation}holds for all  $1\leq s <r$ provided that $r>1$. See Theorem 2.5 in \cite{vanassche} for the proof. 

\section{Some general results}
For a measure $\mu$ having an infinite support on $\mathbb{R}$, let $Z_n(\mu):=\{x:\,P_n(x;\mu)=0\}$ and $Y_n(\mu):=\{x:\,P_n^\prime(x;\mu)=0\}$. For $n> 1$ with $n\in\mathbb{N}$, we define $M_n(\mu)$ by
\begin{equation*}
M_n(\mu):= \inf_{\substack{x,x^\prime\in Z_n(\mu)\\ x\neq x\prime}}|x-x^\prime|.
\end{equation*}
If $\mathrm{supp}(\mu)$ is a Cantor set on $\mathbb{R}$ then the maximal distance between the consecutive zeros of any associated orthogonal polynomial is not so interesting since this value is bounded below (see e.g. (iii) in p. 358 of \cite{denis}) by the half of the length of the largest gap of $\mathrm{supp}(\mu)$. We only discuss $M_n(\cdot)$ here. By $d(A,B)$ we mean the Euclidean distance between the sets $A,B\subset\mathbb{C}$.

\begin{proposition}\label{teo1}
Let $\mu$ be a measure with an infinite support on $\mathbb{R}$. Then for any fixed $l,m,n\in\mathbb{N}$ with $l>m>n>1$, we have $$d(Z_l(\mu),Z_m(\mu))= \inf_{\substack{1\leq i\leq l\\ 1\leq j \leq m}}|x_{i,l}(\mu)-x_{j,m}(\mu)|\leq M_n(\mu).$$
\end{proposition}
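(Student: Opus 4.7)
The equality in the statement is just the definition of Euclidean distance between two finite subsets of $\mathbb{R}$, so my attention is on the inequality $d(Z_l(\mu), Z_m(\mu)) \le M_n(\mu)$. The plan is to produce a single closed interval of length $M_n(\mu)$ that is guaranteed to contain both a zero of $P_l$ and a zero of $P_m$. The natural candidate is the tightest gap between consecutive zeros of $P_n$: since $Z_n(\mu)$ is finite, the infimum defining $M_n(\mu)$ is attained, so I pick an index $i$ with $x_{i+1,n}(\mu) - x_{i,n}(\mu) = M_n(\mu)$.

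My next step would be to apply the reduction \eqref{gauss}: because $n < l$, we have $P_n(\cdot;\mu) = P_n(\cdot;\mu^{(l)})$, and $\mu^{(l)}$ is supported on the $l$-point set $Z_l(\mu)$. Theorem \ref{ddd} applied with $\lambda = \mu^{(l)}$, $r = l$ and $s = n$ tells me that each of the $n$ zeros of $P_n$ sits in some interval $[x_{j,l}(\mu), x_{j+1,l}(\mu)]$, with at most one $P_n$-zero per such interval. Because both sequences of zeros are strictly increasing, I can then choose indices $j_1 < j_2 < \cdots < j_n$ with $x_{k,n}(\mu) \in [x_{j_k,l}(\mu), x_{j_k+1,l}(\mu)]$ for every $k$. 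In particular $j_{i+1} \ge j_i + 1$ forces
$$x_{i,n}(\mu) \;\le\; x_{j_i+1,l}(\mu) \;\le\; x_{j_{i+1},l}(\mu) \;\le\; x_{i+1,n}(\mu),$$
so $x_{j_i+1,l}(\mu)$ is a zero of $P_l$ lying in $[x_{i,n}(\mu), x_{i+1,n}(\mu)]$.

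I would then run the identical argument with $\mu^{(m)}$ in place of $\mu^{(l)}$, which is valid because $n < m$, to produce some $x_{q,m}(\mu) \in Z_m(\mu)$ in the same closed interval. Since both points lie in a set of diameter $M_n(\mu)$, this yields $d(Z_l(\mu), Z_m(\mu)) \le |x_{j_i+1,l}(\mu) - x_{q,m}(\mu)| \le M_n(\mu)$, as required. I do not foresee any real obstacle; the only mildly delicate step is the assertion that the $j_k$ can be taken strictly increasing when ``at most one zero per interval'' is the only input, and this requires a careful tie-breaking only when some $x_{k,n}(\mu)$ happens to coincide with a point of $Z_l(\mu)$, a case always handled by choosing the assignment to keep it order-preserving.
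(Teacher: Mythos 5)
Your argument is correct and follows essentially the same route as the paper: reduce to the discrete measures $\mu^{(l)}$ and $\mu^{(m)}$ via the Gauss quadrature identity \eqref{gauss}, and invoke Theorem~\ref{ddd} (at most one zero of $P_n$ per interval between consecutive support points) to force a zero of $P_l$ and a zero of $P_m$ into the shortest gap between consecutive zeros of $P_n$. The only surface difference is that the paper argues by contradiction that \emph{every} interval $[x_{i,n},x_{i+1,n}]$ meets both $Z_l$ and $Z_m$, while you construct the witnesses in the tightest gap directly via a strictly increasing assignment $k\mapsto j_k$; the ``tie-breaking'' worry you flag is in fact vacuous, since if a zero of $P_n$ sat at an endpoint $x_{j,l}$ shared by two consecutive intervals, Theorem~\ref{ddd} would already force the neighboring $P_n$-zeros out of both of those intervals, so $j_{k+1}\ge j_k+1$ holds automatically.
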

\begin{proof}
Let $\mu^{(l)}$ be defined as in Section 2. Then using \eqref{gauss} and Theorem \ref{ddd} for $\lambda=\mu^{(l)}$, $r=l$ and $s=m$, we have
$x_{1,l}(\mu)<x_{1,m}(\mu)<x_{m,m}(\mu)<x_{l,l}(\mu)$. By using Theorem \ref{ddd} for $\lambda=\mu^{(m)}$, $r=m$ and s=n in a similar manner, we see that
\begin{equation}\label{ara}
x_{1,l}(\mu)<x_{1,m}(\mu)<x_{i,n}<x_{m,m}(\mu)<x_{l,l}(\mu)
\end{equation} 
holds for all $i\in\{1,\dots,n\}$.

Assume to the contrary that for some $i\in\{1,\dots,n-1\}$, $[x_{i,n}(\mu),x_{i+1,n}(\mu)]$ does not contain an element from one of the sets $Z_l(\mu)$ and $Z_m(\mu)$. Without loss of generality, suppose that $Z_m(\mu)\cap [x_{i,n}(\mu),x_{i+1,n}(\mu)]=\emptyset$. Hence, by \eqref{ara}, $[x_{i,n}(\mu),x_{i+1,n}(\mu)]$ is contained in $(x_{j,m}(\mu), x_{j+1,m}(\mu))$
for some $j\in\{1,\dots,m-1\}$. This can not be true since by Theorem \ref{ddd}, $[x_{j,m}(\mu), x_{j+1,m}(\mu)]$ may contain at most one zero of $P_n{(\cdot;\mu^{(m)})}=P_n{(\cdot;\mu)}$. This gives the desired result.
\end{proof}

\begin{theorem}\label{tm}Let $\mu$ be a measure with an infinite support on $\mathbb{R}$ and let $(f_n)_{n=1}^\infty$ be a sequence of nonlinear polynomials. Assume further that there exists an $s_0\in\mathbb{N}$ such that  $f_l^\prime(0)=0$ for all $l>s_0$ and $P_{d_1\dots d_m}(\cdot;\mu)=F_m(\cdot)/\tau_m$ holds for all $m\in\mathbb{N}$. Then for all $k,k^\prime\in\mathbb{N}_0$ with $k>k^\prime$ the following holds: $$d(Z_{d_1\dots d_{s_0+k}}(\mu),Y_{d_1\dots d_{s_0+k}}(\mu))\leq d(Z_{d_1\dots d_{s_0+k}}(\mu),Z_{d_1\dots d_{s_0+k^\prime}}(\mu)).$$
\end{theorem}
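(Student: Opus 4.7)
The plan is to establish the inclusion $Z_{d_1\cdots d_{s_0+k'}}(\mu)\subseteq Y_{d_1\cdots d_{s_0+k}}(\mu)$ and then to deduce the distance inequality from monotonicity of the Euclidean distance function under such an inclusion. The hypothesis $P_{d_1\cdots d_m}(\cdot;\mu)=F_m(\cdot)/\tau_m$ lets us replace the zero set (resp.\ critical set) of the orthogonal polynomial of degree $d_1\cdots d_m$ with the zero set of $F_m$ (resp.\ $F_m'$), so everything becomes a statement about the polynomial $F_m=f_m\circ\cdots\circ f_1$.

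The main step is the inclusion. Since $k>k'$ one can factor $F_{s_0+k}=g\circ F_{s_0+k'}$, where $g:=f_{s_0+k}\circ\cdots\circ f_{s_0+k'+1}$ is a nontrivial composition whose innermost factor has index $s_0+k'+1>s_0$. By the chain rule,
\[
F_{s_0+k}'(z)=g'\bigl(F_{s_0+k'}(z)\bigr)\cdot F_{s_0+k'}'(z),
\]
and another application of the chain rule expresses $g'(0)$ as a product containing the factor $f_{s_0+k'+1}'(0)$, which vanishes by hypothesis. Hence $g'(0)=0$, and consequently any $z$ with $F_{s_0+k'}(z)=0$ satisfies $F_{s_0+k}'(z)=0$. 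This gives the desired inclusion $Z_{d_1\cdots d_{s_0+k'}}(\mu)\subseteq Y_{d_1\cdots d_{s_0+k}}(\mu)$.

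To finish, I would simply note the general fact that if $B\subseteq C$ then $d(A,C)\leq d(A,B)$, since the infimum of $|a-c|$ over a larger set $C$ cannot exceed that over $B$. Applying this with $A=Z_{d_1\cdots d_{s_0+k}}(\mu)$, $B=Z_{d_1\cdots d_{s_0+k'}}(\mu)$ and $C=Y_{d_1\cdots d_{s_0+k}}(\mu)$ yields the stated inequality.

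I do not expect any real obstacle in this argument; the only point requiring a little care is verifying that the composition $g$ is nonempty (which follows from $k>k'$) so that its innermost factor is indeed $f_{s_0+k'+1}$ with $s_0+k'+1>s_0$, and checking that the case $k'=0$ is covered by the same reasoning.
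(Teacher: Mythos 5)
Your argument is correct and is essentially the paper's own proof: both decompose $F_{s_0+k}$ as a composition ending with $F_{s_0+k'}$, apply the chain rule, use $f'_{s_0+k'+1}(0)=0$ to conclude $Z_{d_1\cdots d_{s_0+k'}}(\mu)\subseteq Y_{d_1\cdots d_{s_0+k}}(\mu)$, and then invoke monotonicity of the set distance. Your version is slightly cleaner in that it handles $k=k'+1$ and $k>k'+1$ uniformly, whereas the paper spells the two cases out separately.
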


\begin{proof}
Let us fix $k$ and $k^\prime$. If $k>k^\prime+1$ then for any $z\in\mathbb{C}$,
\begin{align*}
P^\prime_{d_1\dots d_{s_0+k}}(z;\mu)&=F^\prime_{{s_0+k}}(z)/\tau_{{s_0+k}}\\
&=((f_{s_0+k}\circ f_{s_0+k-1}\circ\dots\circ f_{s_0+k^\prime+1})\circ F_{{s_0+k^\prime}})^\prime(z)/\tau_{{s_0+k}}\\
&=\frac{F_{{s_0+k^\prime}}^\prime(z)\cdot(f_{s_0+k}\circ f_{s_0+k-1}\circ\dots\circ f_{s_0+k^\prime+1})^\prime( F_{{s_0+k^\prime}}(z))}{\tau_{s_0+k}}
\end{align*}
$$=\frac{(F_{{s_0+k^\prime}}^\prime\cdot(f_{s_0+k^\prime+1}^\prime\circ F_{{s_0+k^\prime}})\cdot(f_{s_0+k}\circ\dots\circ f_{s_0+k^\prime+2})^\prime(f_{s_0+k^\prime+1}\circ F_{{s_0+k^\prime}}))(z)}{\tau_{{s_0+k}}} $$

holds. Since $f_{s_0+k^\prime+1}^\prime(0)=0$, this implies that
\begin{equation*}\label{www}
Z_{d_1\dots d_{s_0+k^\prime}}(\mu)\subset Y_{d_1\dots d_{s_0+k}}(\mu),
\end{equation*}
 as $P_{d_1\dots d_{s_0+k^\prime}}(z;\mu)=F_{{s_0+k^\prime}}(z)/\tau_{{s_0+k^\prime}}.$ 
 If $k=k^\prime+1$ then
 \begin{align*}
 P^\prime_{d_1\dots d_{s_0+k}}(z;\mu)=\frac{F_{{s_0+k^\prime}}^\prime(z)\cdot(f_{s_0+k^\prime+1}^\prime\circ F_{{s_0+k^\prime}})(z)}{\tau_{{s_0+k}}},
 \end{align*}
 and $Z_{d_1\dots d_{s_0+k^\prime}}(\mu)\subset Y_{d_1\dots d_{s_0+k}}(\mu)$ similarly.
Thus, we have
$$d(Z_{d_1\dots d_{s_0+k}}(\mu),Y_{d_1\dots d_{s_0+k}}(\mu))\leq d(Z_{d_1\dots d_{s_0+k}}(\mu),Z_{d_1\dots d_{s_0+k^\prime}}(\mu)).$$
\end{proof}

The next proposition gives an upper bound for $M_n(\mu)$.
\begin{proposition}\label{uuu}
Let $\mu$ be a measure with an infinite support on $\mathbb{R}$ and let $n\in\mathbb{N}$ be given. Then for any $r\in\mathbb{N}$ satisfying $r>1$ and $r\geq n$, we have 
\begin{equation}\label{tre}
M_r(\mu)\leq\inf_{0\leq i\leq n-1}|x_{i+2,n}(\mu)-x_{i,n}(\mu)|.
\end{equation}
\end{proposition}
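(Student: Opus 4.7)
The plan is to imitate the reduction used in Proposition~\ref{teo1}: pass from $\mu$ to the discrete Gauss quadrature measure $\mu^{(r)}$ via identity~\eqref{gauss}, and invoke Theorem~\ref{ddd} to pigeon-hole the zeros of $P_n(\cdot;\mu)$ into distinct closed gaps formed by consecutive zeros of $P_r(\cdot;\mu)$. The goal is then, for each admissible $i$, to exhibit two distinct zeros of $P_r(\cdot;\mu)$ lying in the closed interval $[x_{i,n}(\mu),x_{i+2,n}(\mu)]$; the distance between them is at least $M_r(\mu)$ and at most $x_{i+2,n}(\mu)-x_{i,n}(\mu)$, which gives the desired inequality after taking the infimum over $i$.

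First I would dispose of the trivial case $r=n$. For $1\leq i\leq n-2$, the quantity $x_{i+2,n}(\mu)-x_{i,n}(\mu)$ is a sum of two consecutive gaps between zeros of $P_n(\cdot;\mu)$ and so majorizes $M_n(\mu)$; the boundary cases $i=0$ and $i=n-1$ reduce to $M_n(\mu)\leq x_{2,n}(\mu)-x_{1,n}(\mu)\leq x_{2,n}(\mu)-x_{0,n}(\mu)$ and the symmetric inequality on the right, using only that $x_{0,n}(\mu)$ and $x_{n+1,n}(\mu)$ are the endpoints of $\mathrm{supp}(\mu)$ and bracket the outermost zeros of $P_n(\cdot;\mu)$.

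For the substantive case $r>n$, I apply Theorem~\ref{ddd} with $\lambda=\mu^{(r)}$ (whose support is $\{x_{1,r}(\mu),\ldots,x_{r,r}(\mu)\}$) and $s=n$; by~\eqref{gauss}, $P_n(\cdot;\mu^{(r)})=P_n(\cdot;\mu)$. Setting $\sigma(i)$ to be the largest $j\in\{1,\ldots,r-1\}$ with $x_{j,r}(\mu)\leq x_{i,n}(\mu)$, Theorem~\ref{ddd} forces $\sigma\colon\{1,\ldots,n\}\to\{1,\ldots,r-1\}$ to be strictly increasing, and yields the strict inequality $x_{i,n}(\mu)<x_{\sigma(i)+1,r}(\mu)\leq x_{i+1,n}(\mu)$ for every $i$. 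For $1\leq i\leq n-2$, the two zeros $x_{\sigma(i)+1,r}(\mu)$ and $x_{\sigma(i+1)+1,r}(\mu)$ of $P_r(\cdot;\mu)$ are therefore distinct and both lie in $(x_{i,n}(\mu),x_{i+2,n}(\mu)]$, which is the model computation.

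The main subtlety --- modest, but the only place where the support endpoints of $\mu$ enter directly --- is the boundary case $i=0$ (and, symmetrically, $i=n-1$). Here one of the required zeros of $P_r(\cdot;\mu)$ is $x_{1,r}(\mu)$ itself: it lies strictly to the right of $x_{0,n}(\mu)$ because every zero of $P_r(\cdot;\mu)$ belongs to the open convex hull of $\mathrm{supp}(\mu)$, and strictly to the left of $x_{1,n}(\mu)$ by the interlacing already recorded in~\eqref{ara}. A second, distinct zero $x_{\sigma(1)+1,r}(\mu)$ (note $\sigma(1)\geq 1$, so $\sigma(1)+1\geq 2$) is supplied by the pigeon-hole above and lies in $(x_{1,n}(\mu),x_{2,n}(\mu)]$. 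With both zeros in $[x_{0,n}(\mu),x_{2,n}(\mu)]$, the bound $M_r(\mu)\leq x_{2,n}(\mu)-x_{0,n}(\mu)$ follows, and taking the infimum over $i$ completes the proof.
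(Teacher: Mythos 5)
Your proof is correct and follows essentially the same route as the paper's: reduce to the discrete Gauss--quadrature measure $\mu^{(r)}$ via \eqref{gauss}, invoke Theorem~\ref{ddd} to pigeon-hole the zeros of $P_n(\cdot;\mu)$ among the closed gaps $[x_{j,r},x_{j+1,r}]$, and split into interior and boundary cases. The only difference is organizational --- you set up the monotone indexing map $\sigma$ and exhibit the two required zeros of $P_r$ directly for every $i$, whereas the paper fixes the minimizing index $j$ and obtains the second zero by a short contradiction argument; the underlying mechanism and case analysis are the same.
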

\begin{proof}
For $r=n$, \eqref{tre} follows from the definition of $M_r(\mu)$. So, let us pick an $r\in \mathbb{N}$ with $r>n$. 

Let $j\in\{0,\dots, n-1\}$ be chosen so that $$|x_{j+2,n}(\mu)-x_{j,n}(\mu)|=\inf_{0\leq i\leq n-1}|x_{i+2,n}(\mu)-x_{i,n}(\mu)|.$$ There are two cases to consider.

First, assume that $x_{j,n}(\mu)=x_{0,n}(\mu)$ or $x_{j+2,n}(\mu)=x_{n+1,n}(\mu)$ holds. Let $x_{j,n}(\mu)=x_{0,n}(\mu)$. Using Theorem \ref{ddd} for $\lambda=\mu^{(r)}$, we have $$x_{0,n}(\mu)<x_{1,r}(\mu)<x_{1,n}(\mu)<x_{r,r}(\mu)<x_{n+1,n}(\mu).$$ If we use Theorem \ref{ddd}, for $\lambda=\mu^{(r)}$ we see that $[x_{1,r}(\mu),x_{2,r}(\mu)]$ may contain at most one element from $\{x_{1,n}(\mu),\dots, x_{n,n}(\mu)\}$. Therefore, $M_r(\mu)\leq |x_{2,r}(\mu)-x_{1,r}(\mu)|\leq |x_{2,n}(\mu)-x_{0,n}(\mu)|$. For the case, $j+2=n+1$, a similar argumentation shows that $M_r(\mu)\leq |x_{r,r}(\mu)-x_{r-1,r}(\mu)|\leq |x_{n+1,n}(\mu)-x_{n-1,n}(\mu)|$.

Now, let us assume that $x_{j,n}(\mu)\neq x_{0,n}(\mu)$ and $x_{j+2,n}\neq x_{n+1,n}$. Using Theorem \ref{ddd} for $\lambda=\mu^{(r)}$, we have $$x_{1,r}(\mu)<x_{j,n}(\mu)<x_{j+1,n}(\mu)<x_{j+2,n}(\mu)<x_{r,r}(\mu).$$ Thus, there is a $k_1\in\mathbb{N}$ with $1<k_1<r$ such that $x_{k_1,r}(\mu)\in [x_{j,n}(\mu), x_{j+1,n}(\mu)]$ because otherwise there is an $i\in\{1,\dots, r-1\}$ such that $[x_{j,n}(\mu), x_{j+1,n}(\mu)]\subset (x_{i,r}(\mu),x_{i+1,r}(\mu) )$ and this would imply that $[x_{i,r}(\mu),x_{i+1,r}(\mu)]$ contains two zeros of $P_n(\cdot; \mu^{(r)})$ which is impossible again by Theorem \ref{ddd}. Assume to the contrary that $x_{k_1,r}(\mu)$ is the only zero of $P_r(\cdot; \mu)$ in $[x_{j,n}(\mu), x_{j+2,n}(\mu)]$. This implies that $(x_{k_1-1,r}(\mu), x_{k_1+1,r}(\mu))$ contains at least three zeros of $P_n(\cdot; \mu)$ but this is impossible by Theorem \ref{ddd} as $[x_{k_1-1,r}(\mu), x_{k_1+1,r}(\mu)]=[x_{k_1-1,r}(\mu^{(r)}), x_{k_1+1,r}(\mu^{(r)})]$ may contain at most $2$ zeros of $P_n(\cdot; \mu^{(r)})$ if we let $\lambda=\mu^{(r)}$. Hence there is a $k_2\in\mathbb{N}$ with $1<k_2<r$ and $k_2\neq k_1$ such that $x_{k_2,r}(\mu)\in [x_{j,n}(\mu), x_{j+2,n}(\mu)]$. Thus, $M_r(\mu)\leq |x_{k_2,r}(\mu)-x_{k_1,r}(\mu)|\leq |x_{j+2,n}(\mu)-x_{j,n}(\mu)|$. This shows that \eqref{tre} holds.
\end{proof}

\section{Zero spacing of orthogonal polynomials for a special family}

In this section, we study the spacing of the zeros of orthogonal polynomials for $\rho_{K(\gamma)}$ where $K(\gamma)$ is a Cantor set introduced in \cite{gonc}.

The construction and results in this and the next paragraph can be found in \cite{gonc}. Let, here and in the sequel, $\gamma_0:=1$ and $\gamma=(\gamma_k)_{k=1}^\infty$ be a sequence satisfying $0<\gamma_k<1/4$ for all $k\in\mathbb{N}$ provided that $\sum_{k=1}^\infty 2^{-k}\log{(1/{\gamma_k)}}<\infty$. We define $(f_n)_{n=1}^\infty$ by $f_1(z):=2z(z-1)/\gamma_1+1$ and $f_n(z):=z^2/(2\gamma_n)+1-1/(2\gamma_n)$ for $n>1$. Let $E_0:=[0,1]$ and $E_n:=F_n^{-1}([-1,1])$ where $F_n$ stands for $f_n\circ\dots\circ f_1$ as in Section 2. Then, $E_n$ is a union of $2^n$ disjoint non-degenerate compact intervals in $[0,1]$ and $E_n\subset E_{n-1}$ for all $n\in\mathbb{N}$. It turns out that, $K(\gamma):=\cap_{s=0}^\infty E_s$ is a non-polar Cantor set in $[0,1]$ where $\{0,1\}\subset K(\gamma)$. 

Let us look more carefully at the construction. We denote the connected components of $E_n$ by $I_{j,n}$ and the length of $I_{j_n}$ by $l_{j,n}$ for $j=1,\dots 2^n$, call these intervals as basic intervals of $n$-th level, define $a_{j,n}$ and $b_{j,n}$ by $[a_{j,n}, b_{j,n}]:= I_{j,n}$. Let $I_{1,0}:=E_0$ and $a_{j_1,n}>a_{j_2,n}$ if $j_1>j_2$. Then we have $I_{2j-1,n+1}\cup I_{2j,n+1}\subset I_{j,n}$ for all $n\in\mathbb{N}_0$ where $a_{2j-1, n+1}=a_{j,n}$ and $b_{2j,n+1}=b_{j,n}$. Denoting the gap $(b_{2j-1,n+1}, a_{2j,n+1})$ by $H_{j,n}$, for $1\leq j\leq 2^n$ and $n\in\mathbb{N}_0$, it follows that $$K(\gamma)= [0,1]\setminus (\cup_{n=0}^\infty\cup_{1\leq j\leq 2^n} H_{j,n}).$$

Using Theorem 11 in \cite{ger}, we see that $\rho_{E_n}(I_{j,n})=1/2^n$ for all $1\leq j \leq 2^n$ and $n\in\mathbb{N}_0$. Furthermore,  $\rho_{E_k}(I_{j,n})=1/2^n$ for $k>n$ since $I_{j,n}\cap E_k$ consists of $2^{k-n}$ basic disjoint intervals of $k$-th level. Since $(E_k)_{k=0}^\infty$ is a decreasing sequence of sets with $\cap_{s=0}^\infty E_s=K(\gamma)$, by part (ii) of Theorem A.16 in \cite{Sim4} (see also the proof of Corollary 3.2 in \cite{ag}) , it follows that $\rho_{K(\gamma)}(I_{j,n})=\rho_{K(\gamma)}(I_{j,n}\cap K(\gamma))=1/2^n$. The last in particular implies that $\rho_{K(\gamma)}([0,r])\in\mathbb{Q}$ for all $r\in\mathbb{R}$ with $r\notin K(\gamma)$.

It follows from the definition of equilibrium measure that $\mathrm{supp}{(\rho_{K(\gamma)})}\subset K(\gamma)$. We also have $K(\gamma)\subset \mathrm{supp}{(\rho_{K(\gamma)})}$ since for any $x\in K(\gamma)$ and $\epsilon>0$ the open ball $B_{\epsilon}(x)$ contains a basic interval $I_{j,n}$. From the above paragraph $\rho_{K(\gamma)}(I_{j,n}\cap K(\gamma))>0$ and therefore $K(\gamma)= \mathrm{supp}{(\rho_{K(\gamma)})}$.

In \cite{alp1}, it was shown that $P_{d_1\dots d_m}(z;\rho_{K(\gamma)})=P_{2^m}(z;\rho_{K(\gamma)})=F_m(z)/\tau_m$ for all $m\in\mathbb{N}$. Moreover, the recurrence coefficients have a simple form by Theorem 4.3 in \cite{alp1}. Let us denote the Lebesgue measure on the real line by $|\cdot|$. By lemma 6 in \cite{gonc}, $|\mathrm{supp}{(\rho_{K(\gamma)})}|=|K(\gamma)|=0$ if $0<\gamma_k<1/32$ for all $k\in\mathbb{N}$. Using spectral theory techniques developed for orthogonal polynomials this result was generalized in \cite{alp1}. If $\gamma$ satisfies $0< \gamma_k\leq 1/6$ for all $k$ then, by \cite{alp1}, $\liminf a_n=0$ where $(a_n)_{n=1}^\infty$ is the sequence of recurrence coefficients for $\rho_{K(\gamma)}$. The last, by \cite{dombr}, implies that $\rho_{K(\gamma)}$ has no non-trivial absolutely continuous part. Using the fact that $K(\gamma)= \mathrm{supp}{(\rho_{K(\gamma)})}$, we see that $\rho_{K(\gamma)}$ is purely singular continuous provided that $0<\gamma_k\leq 1/6$ for all $k\in\mathbb{N}$. Moreover, since $|\mathrm{supp}{(\rho_{K(\gamma)})}|>0$ guarantees that (see \cite{nazarov} and Section 1 of \cite{polto}) $\rho_{K(\gamma)}$ has a non-trivial absolutely continuous part, $|K(\gamma)|=0$ holds true for such a $\gamma$. 

Conversely, if $\gamma=(\gamma_k)_{k=1}^\infty$ satisfies $\sum_{k=1}^\infty \sqrt{(1-4\gamma_k)}<\infty$ then $|K(\gamma)|>0$ by \cite{alp2}. This was actually shown for the stretched version $K_1(\gamma)$ of $K(\gamma)$ but the same condition is valid for $K(\gamma)$. In the proof and the statement of Theorem 6.2 in \cite{alp2}, if we take $Z_0=1/2$, $\varepsilon_k=1/4-\gamma_k$ and 
put $K(\gamma)$ instead of $K_1(\gamma)$ then we have the condition that makes $K(\gamma)$ a Parreau-Widom set. At the end of the paper we return this condition and discuss it in a slightly more detailed way.

For a given sequence $\gamma=(\gamma_k)_{k=1}^\infty$, $f_1$ has two inverse branches $v_{1,1}, v_{2,1}:[-1,1]\rightarrow [0,1]$ with $v_{1,1}(t)=1/2-(1/2)\sqrt{1-2\gamma_1+2\gamma_1 t}$ and $v_2(t)= 1-v_1(t)$ where $(f_n)_{n=1}^\infty$ is defined as in the beginning of this section. For each $n>1$, $f_n$ has two inverse branches $v_{1,n}, v_{2,n}:[-1,1]\rightarrow [-1,1]$ such that $v_{1,n}(t)=\sqrt{1-2\gamma_n+2\gamma_n t}$ and $v_{2,n}(t)=-v_{1,n}(t)$. Note that $v_{1,n}([-1,1])\cap v_{2,n}([-1,1])=\emptyset$ for all $n\in\mathbb{N}$. By the fundamental theorem of algebra, for each $a\in\mathbb{C}$, $F_n(z)=a$ has at most $2^n$ different  solutions and therefore $\{v_{i_1,1}\circ\dots \circ v_{i_n,n}\}_{i_n\in\{1,2\}}$ gives the total set of inverse branches of $F_n=f_n\circ\dots \circ f_1$ on $[-1,1]$. In addition to this, for each $I_{j,n}$ there is a unique choice of $i_l\in\{1,2\}$ for $l=1,\dots,n$ giving $(v_{i_1,1}\circ \dots \circ v_{i_n,n})([-1,1])=I_{j,n}$ and in particular $I_{1,n}= (v_{1,1}\circ v_{1,2}\dots \circ v_{1,n})([-1,1])$. 

Now, let $u(t)=1/2-(1/2)\sqrt{1-4t}$ for $0\leq t\leq 1/4$. Then $(v_{i_1,1}\circ \dots \circ v_{i_n,n})(t)= g_1(\gamma_1\cdot g_2(\gamma_2 \dots \gamma_{n-1}\cdot g_n(\tilde{t})))$ for all $t\in [-1,1]$ where $g_l=u$ if $i_l=1$ and $g_l=1-u$ if $i_l=2$ for $l=1,\dots,n$, and $\tilde{t}= (\gamma_n-\gamma_n t)/2$. This last representation of inverse branches, which was used also in Section 3 of \cite{gonc}, simplifies the calculations since we have only two functions $u$ and $1-u$ instead of $2^n$ different functions. The function $u$ has a couple of nice properties that we will exploit many times. The last two of them are from \cite{gonc}.
\begin{proposition}\label{ccc}
The following hold:
\begin{enumerate}[label=(\alph*)]
\item $u$ and $u^\prime$ are strictly increasing. In particular, $u$ is strictly convex.	

\item $U_n:=u(\gamma_1\cdot u(\gamma_2 \dots \gamma_{n-1}\cdot u(\gamma_n)))= (1-\cos{(\pi/2^n)})/2$ for all $n\in\mathbb{N}$ where we take $\gamma_k= 1/4$ for all $k$. The number $(1-\cos{(\pi/2^n)})/2$ is the leftmost critical point of $F_n(z)$ and $F_n(z)/\tau_n=2^{-2^{n}} T_{2^n}(2z-1)$ by Example 1 of \cite{gonc} where $T_{2^n}$ is the $2^n$-th monic Chebyshev polynomial of the first kind.

\item $u(at)\leq a u(t)$ for all $0\leq t\leq 1/4$ and $0\leq a \leq 1$.

\item $u(t)\sqrt{1-4t}\leq t$ for $0\leq t\leq 1/4$.
\end{enumerate}
\end{proposition}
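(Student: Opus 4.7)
The plan is to handle the four items separately: (a) is a single derivative calculation; the identity in (b) falls to induction on $n$ via a half-angle identity; and the Chebyshev form of $F_n/\tau_n$ together with the identification of $(1-\cos(\pi/2^n))/2$ as the leftmost critical point in the second part of (b) would simply be quoted from Example 1 of \cite{gonc}, as the statement already indicates. Properties (c) and (d) are attributed to \cite{gonc}; I would include one-line derivations based on (a) for completeness.

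For (a), differentiate $u(t)=\tfrac{1}{2}-\tfrac{1}{2}\sqrt{1-4t}$ on $[0,1/4)$ to get
\[
u'(t)=\frac{1}{\sqrt{1-4t}}>0,
\]
which is positive and strictly increasing because $1-4t$ is strictly decreasing; hence $u$ is strictly increasing and (equivalently) strictly convex.

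For the identity in (b), specialise to $\gamma_k=1/4$ for every $k$ and write $V_n$ for the resulting $U_n$. Peeling off the outermost $u$ yields the recursion $V_{n+1}=u(V_n/4)$, with base case $V_1=u(1/4)=1/2=(1-\cos(\pi/2))/2$. Assuming inductively that $V_n=(1-\cos(\pi/2^n))/2$, the half-angle identity gives
\[
1-V_n=\frac{1+\cos(\pi/2^n)}{2}=\cos^2\!\Big(\frac{\pi}{2^{n+1}}\Big),
\]
so that
\[
V_{n+1}=u(V_n/4)=\frac{1}{2}-\frac{1}{2}\cos\!\Big(\frac{\pi}{2^{n+1}}\Big)=\frac{1-\cos(\pi/2^{n+1})}{2},
\]
closing the induction. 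Note $V_n\in[0,1/2]$, so $V_n/4$ stays in the domain of $u$.

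For (c), use $u(0)=0$ and convexity of $u$ (from (a)): for $0\le a\le 1$ and $t\in[0,1/4]$, $u(at)=u\bigl(a\cdot t+(1-a)\cdot 0\bigr)\le a\,u(t)+(1-a)\,u(0)=a\,u(t)$. For (d), set $s=\sqrt{1-4t}\in[0,1]$, whence $u(t)=(1-s)/2$ and $t=(1-s)(1+s)/4$; the desired inequality $u(t)\sqrt{1-4t}\le t$ becomes $s(1-s)/2\le (1-s)(1+s)/4$, equivalently $(1-s)^2\ge 0$. There is no real obstacle in the proof: the only step requiring any recognition is that the half-angle identity is exactly what produces the collapse in the inductive step of (b); everything else is routine.
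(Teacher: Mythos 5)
Your proof is correct, and it supplies more detail than the paper itself, which states the proposition without proof (attributing parts (c) and (d) to \cite{gonc} and the Chebyshev identification in (b) to Example 1 there). Your treatments of (a), (c), and (d) are the obvious elementary ones, and the inductive step in (b) via the half-angle identity $1+\cos\theta = 2\cos^2(\theta/2)$ is precisely what the equation requires; you correctly note that $\cos(\pi/2^{n+1})\ge 0$ so the square root resolves with the right sign and that $V_n/4$ stays in $[0,1/4]$. The only cosmetic point is that in (a) the phrase ``hence $u$ is strictly increasing and (equivalently) strictly convex'' conflates two separate facts --- $u'>0$ gives strict monotonicity, while $u'$ strictly increasing (i.e., $u''>0$) gives strict convexity --- but both are established by your computation, so this is a wording issue rather than a gap.
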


Next two lemmas easily follow from the properties mentioned in this section and the theorems from the previous one. 

\begin{lemma}\label{rere1} Let $\gamma=(\gamma_k)_{k=1}^\infty$ be given. For all $n\in\mathbb{N}$, we have

$$d(Z_{2^n}(\rho_{K(\gamma)}),Y_{2^n}(\rho_{K(\gamma)}))\geq$$ $$\inf_{\substack{i_j\in\{1,2\}\\ t\in\{-1,1\}}}|(v_{i_1,1}\circ \dots \circ v_{i_n,n})(t)-(v_{i_1,1}\circ \dots \circ v_{i_n,n})(0)|=$$

$$\inf_{\substack{g_i\in\{u,1-u\}\\ \tilde{t}\in\{0,\gamma_n\}}}|g_{1}(\gamma_1 \cdot g_{2}(\dots \gamma_{{n-1}}\cdot g_{n}(\tilde{t})))-g_{1}(\gamma_1 \cdot g_{2}(\dots \gamma_{{n-1}}\cdot g_{n}(\gamma_n/2)))|.$$
\end{lemma}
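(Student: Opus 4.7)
The plan is to use the identity $P_{2^n}(z;\rho_{K(\gamma)}) = F_n(z)/\tau_n$ quoted at the start of Section 4, which identifies $Z_{2^n}(\rho_{K(\gamma)})$ with the zero set of $F_n$ and $Y_{2^n}(\rho_{K(\gamma)})$ with the critical set of $F_n$. I first fix the geometry: the composition $v_{i_1,1}\circ\cdots\circ v_{i_n,n}$ maps $[-1,1]$ onto one of the $2^n$ basic intervals $I_{j,n}$, and as the branch indices run over $\{1,2\}^n$ each basic interval is hit exactly once. In particular the unique zero of $F_n$ inside $I_{j,n}$ is $(v_{i_1,1}\circ\cdots\circ v_{i_n,n})(0)$, and the two endpoints $a_{j,n}, b_{j,n}$ of $I_{j,n}$ are $(v_{i_1,1}\circ\cdots\circ v_{i_n,n})(\pm 1)$.

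The heart of the proof is to show that every critical point $y$ of $F_n$ satisfies $|F_n(y)| > 1$, and therefore lies outside $E_n \supset \bigcup_j I_{j,n}$. From the chain rule $F_n'(z) = \prod_{k=1}^n f_k'(F_{k-1}(z))$, the condition $F_n'(y)=0$ forces $f_k'(F_{k-1}(y))=0$ for some $k\in\{1,\dots,n\}$. For $k\ge 2$ this gives $F_{k-1}(y)=0$ and hence $F_k(y)=f_k(0)=1-1/(2\gamma_k)<-1$ (using $\gamma_k<1/4$); for $k=1$ it gives $y=1/2$ and $F_1(1/2)=1-1/(2\gamma_1)<-1$ for the same reason. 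Once some $F_{k_0}(y)<-1$, the elementary estimate $f_l(w)=w^2/(2\gamma_l)+1-1/(2\gamma_l)>1$ for $|w|>1$ and $l\ge 2$ propagates this to $F_{k_0+1}(y),\dots,F_n(y)$ all being greater than $1$.

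Combining the two ingredients, for any zero $z^*=(v_{i_1,1}\circ\cdots\circ v_{i_n,n})(0)\in I_{j,n}$ and any critical point $y^*\in Y_{2^n}(\rho_{K(\gamma)})$, the fact that $y^*\notin I_{j,n}$ forces
$$|z^*-y^*|\ge \min(|z^*-a_{j,n}|,|z^*-b_{j,n}|)=\min_{t\in\{-1,1\}}|(v_{i_1,1}\circ\cdots\circ v_{i_n,n})(t)-(v_{i_1,1}\circ\cdots\circ v_{i_n,n})(0)|.$$
Taking the infimum over all branch tuples yields the first inequality of the lemma, and the second equality is then a direct translation through the closed-form representation $(v_{i_1,1}\circ\cdots\circ v_{i_n,n})(t)=g_1(\gamma_1 g_2(\cdots \gamma_{n-1} g_n(\tilde t)))$ displayed just before Proposition 3.4: the change of variable $\tilde t=\gamma_n(1-t)/2$ sends $t=0$ to $\tilde t=\gamma_n/2$ and $t\in\{-1,1\}$ to $\tilde t\in\{0,\gamma_n\}$.

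The main obstacle is the second paragraph, i.e., verifying that \emph{every} critical point of $F_n$ escapes $E_n$. This relies critically on the explicit quadratic form of the $f_l$ together with the standing assumption $\gamma_l<1/4$, which is precisely what forces each critical value of $f_l$ strictly to the left of $-1$ and lets the subsequent iterates flee off to $+\infty$; without it the geometric picture would collapse and the inequality could fail.
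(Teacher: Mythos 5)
Your proof is correct and follows essentially the same route as the paper's: identify $Z_{2^n}$ and $Y_{2^n}$ with the zeros and critical points of $F_n$, show that critical points of $F_n$ map to values of modulus exceeding $1$ and hence lie outside every basic interval $I_{j,n}$, deduce that the unique zero in $I_{j,n}$ is at distance at least its distance to the endpoints $a_{j,n},b_{j,n}$, and rewrite those endpoints via the inverse branches and the $u$-representation. You actually supply more detail than the paper, which merely asserts $|F_n(y)|>1$ at critical points, whereas you give the chain-rule and escape-to-$+\infty$ argument justifying it.
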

\begin{proof}
Let us choose an $n\in\mathbb{N}$. Note that, $|F_n(z)|>1$ for all $z$ satisfying $P^\prime_{2^n}(z;\rho_{K(\gamma)})= F_n^\prime(z)/\tau_n=0$. 
Moreover $F_n(I_{j,n})=[-1,1]$ and thus $Y_{2^n}(\rho_{K(\gamma)})\cap I_{j,n}=\emptyset$ for each $1\leq j \leq 2^n$. This implies that $d(x_{j,2^n}(\rho_{K(\gamma)}),Y_{2^n}(\rho_{K(\gamma)}))\geq d(x_{j,2^n}(\rho_{K(\gamma)}), \{a_{j,n},b_{j,n}\})$. Hence, the first inequality holds. The second one follows from the definition of $g_i$.
\end{proof}

\begin{lemma}\label{rere}
Let $\gamma=(\gamma_k)_{k=1}^\infty$ and $r\in\mathbb{N}$ be given. Then, for any $k\in\mathbb{N}$ with $r\geq 2^k$,
\begin{align*}
M_r(\rho_{K(\gamma)})&\leq \inf_{0\leq i\leq 2^k-1} |x_{i+2,2^k}(\rho_{K(\gamma)})-x_{i,2^k}(\rho_{K(\gamma)})|
\\ 
&\leq |x_{2,2^k}(\rho_{K(\gamma)})-x_{0,2^k}(\rho_{K(\gamma)})|\\ &\leq l_{1,k-1},
\end{align*}
holds.
\end{lemma}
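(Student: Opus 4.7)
The statement chains together three inequalities, and the plan is to dispatch each in order.

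The first inequality, $M_r(\rho_{K(\gamma)})\leq \inf_{0\leq i\leq 2^k-1}|x_{i+2,2^k}(\rho_{K(\gamma)})-x_{i,2^k}(\rho_{K(\gamma)})|$, is immediate from Proposition~\ref{uuu} applied to $\mu=\rho_{K(\gamma)}$ with $n=2^k$: the measure has infinite support $K(\gamma)\subset\mathbb{R}$, and the hypothesis $r\geq 2^k>1$ matches the condition of that proposition. The second inequality is trivial, being obtained by choosing $i=0$ in the infimum.

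For the third inequality $|x_{2,2^k}(\rho_{K(\gamma)})-x_{0,2^k}(\rho_{K(\gamma)})|\leq l_{1,k-1}$, I would first invoke the identity $P_{2^k}(z;\rho_{K(\gamma)})=F_k(z)/\tau_k$ recorded earlier in this section, so that the zeros of the orthogonal polynomial coincide with those of $F_k$. Since $F_k(I_{j,k})=[-1,1]$, the intermediate value theorem supplies at least one zero of $F_k$ in each basic interval $I_{j,k}$; because $F_k$ has exactly $2^k$ real simple zeros and the $2^k$ intervals $I_{j,k}$ are pairwise disjoint, this forces exactly one zero per interval, and the natural left-to-right ordering of both the intervals and the zeros then gives $x_{j,2^k}(\rho_{K(\gamma)})\in I_{j,k}$ for every $j$. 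In particular, $x_{2,2^k}(\rho_{K(\gamma)})\in I_{2,k}\subset I_{1,k-1}$ (the inclusion being the $j=1$, $n+1=k$ case of $I_{2j-1,n+1}\cup I_{2j,n+1}\subset I_{j,n}$), whence $x_{2,2^k}(\rho_{K(\gamma)})\leq b_{1,k-1}$. Combining with $x_{0,2^k}(\rho_{K(\gamma)})=\min K(\gamma)=0=a_{1,k-1}$ yields the desired bound $b_{1,k-1}-a_{1,k-1}=l_{1,k-1}$.

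No serious obstacle is expected; the whole argument is bookkeeping built on the nested construction of $(E_n)_{n\geq 0}$ and the already established identification of $P_{2^k}(\cdot;\rho_{K(\gamma)})$ with $F_k/\tau_k$. The one point worth double-checking is the assignment of the $j$-th zero of $F_k$ to the $j$-th basic interval, which is forced by the degree count paired with the intermediate value theorem applied on each $I_{j,k}$.
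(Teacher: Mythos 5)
Your proof is correct and takes essentially the same route as the paper: the first inequality is exactly Proposition~\ref{uuu} applied with $\mu=\rho_{K(\gamma)}$ and $n=2^k$, the second is trivial, and the third comes from locating $x_{2,2^k}(\rho_{K(\gamma)})$ inside $I_{1,k-1}$ and noting $x_{0,2^k}(\rho_{K(\gamma)})=0=a_{1,k-1}$. You simply spell out the one-zero-per-basic-interval bookkeeping that the paper leaves implicit in the claim $[0,x_{2,2^k}(\rho_{K(\gamma)})]\subset I_{1,k-1}$.
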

\begin{proof}
By using Proposition \ref{uuu} for $\mu= \rho_{K(\gamma)}$ and $n=2^k$, it can be seen that the first inequality holds. The last one holds true since $[x_{0,2^k}(\rho_{K(\gamma)}), x_{2,2^k}(\rho_{K(\gamma)})]=[0,x_{2,2^k}(\rho_{K(\gamma)})]\subset I_{1,k-1}$ for all $k\in\mathbb{N}$.
\end{proof}

Now, let us prove an auxiliary result which is an analogue of Lemma 5.2 from \cite{alp2}. For a given $\gamma=(\gamma_k)_{k=1}^\infty$, we denote the product $\gamma_0\dots \gamma_n$ by $\delta_n$ for $n\in\mathbb{N}_0$.

\begin{lemma}\label{yty}
Let $\gamma=(\gamma_k)_{k=1}^\infty$ be given. Then 
\begin{equation}\label{bbb}
\delta_s\leq l_{1,s}\leq \frac{\pi^2}{4}\cdot \delta_s
\end{equation} holds for all $s\in\mathbb{N}_0$. 
\end{lemma}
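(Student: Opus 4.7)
My plan is to exploit the explicit formula for $l_{1,s}$ coming from the inverse branches. Writing
$$l_{1,s} = (v_{1,1}\circ\cdots\circ v_{1,s})(-1) = u\bigl(\gamma_1 u\bigl(\gamma_2 u(\cdots u(\gamma_{s-1}u(\gamma_s))\cdots)\bigr)\bigr)$$
for $s\geq 1$ (since the other endpoint is $0$), while $l_{1,0}=\delta_0=1$ handles the degenerate case directly.

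For the lower bound, I would use strict convexity of $u$ (Proposition \ref{ccc}(a)) together with $u(0)=0$ and $u'(0)=1$ to obtain the tangent-line inequality $u(t)\geq t$ on $[0,1/4]$. Peeling the nested expression from the inside out, starting from $u(\gamma_s)\geq\gamma_s$, each step multiplies by $\gamma_k$ and then applies $u(\cdot)\geq\cdot$; at every stage the argument remains inside $[0,1/8]\subset [0,1/4]$, which is automatic because $\gamma_k\leq 1/4$ and every $u$-value is at most $1/2$. Telescoping yields $l_{1,s}\geq \gamma_1\cdots\gamma_s=\delta_s$.

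For the upper bound, the cleanest route is to benchmark against the Chebyshev case $\gamma_k\equiv 1/4$, where Proposition \ref{ccc}(b) identifies the nested expression with $U_s=(1-\cos(\pi/2^s))/2$. Concretely, I would introduce the auxiliary sequence $\tilde L_s:=u(1/4)=1/2$ and $\tilde L_k:=u(\tilde L_{k+1}/4)$ so that $\tilde L_1=U_s$, alongside the general sequence $L_s:=u(\gamma_s)$ and $L_k:=u(\gamma_k L_{k+1})$ for $k<s$, and prove by downward induction on $k$ the scaled comparison
$$L_k\le 4^{s-k+1}\gamma_k\gamma_{k+1}\cdots\gamma_s\cdot\tilde L_k.$$
The base case $k=s$ is immediate from Proposition \ref{ccc}(c) applied with $a=4\gamma_s\leq 1$ and $t=1/4$. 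For the inductive step, monotonicity of $u$ (Proposition \ref{ccc}(a)) combined with the induction hypothesis gives $L_k\leq u\bigl(4^{s-k+1}\gamma_k\cdots\gamma_s\cdot\tilde L_{k+1}/4\bigr)$, and then Proposition \ref{ccc}(c) applied with $a=4^{s-k+1}\gamma_k\cdots\gamma_s\leq 1$ (the constraint $\gamma_j\leq 1/4$ is exactly what forces $a\leq 1$) and $t=\tilde L_{k+1}/4\leq 1/8$ closes the step. Setting $k=1$ and invoking the elementary inequality $1-\cos\theta\leq\theta^2/2$ with $\theta=\pi/2^s$ collapses $4^s U_s$ to at most $\pi^2/4$, producing $l_{1,s}\leq(\pi^2/4)\delta_s$.

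The main obstacle is pinning down the sharp constant $\pi^2/4$ in the upper bound: a naive iteration of the crude bound $u(t)\leq 2t$ only yields $2^s\delta_s$, so the essential trick is the scaling comparison with the Chebyshev iterates $\tilde L_k$. The subtle point in the induction is making sure the cumulative factor $a=4^{s-k+1}\gamma_k\cdots\gamma_s$ stays $\leq 1$ at every step so that Proposition \ref{ccc}(c) is applicable, which is precisely where the hypothesis $\gamma_k\leq 1/4$ gets used.
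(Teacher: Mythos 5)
Your proof is correct and follows essentially the same route as the paper: both reduce $l_{1,s}$ to the nested expression $u(\gamma_1 u(\cdots u(\gamma_s)))$, compare it against the Chebyshev iterate $U_s$ by repeatedly applying $u(at)\leq a\,u(t)$ (your induction on $L_k$ is just the paper's one-line inequality $l_{1,s}\leq 4^s\delta_s U_s$ spelled out), and finish with $1-\cos\theta\leq\theta^2/2$; the lower bound via the tangent-line inequality $u(t)\geq t$ is likewise the same elementary step the paper invokes.
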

\begin{proof}
For $s=0$, \eqref{bbb} holds trivially. So, let $s\geq 1$. Observe that
\begin{align}
l_{1,s}&=u(\gamma_1\cdot u(\gamma_2 \dots \gamma_{s-1}\cdot u(\gamma_s)))-u(\gamma_1\cdot u(\gamma_2 \dots \gamma_{s-1}\cdot u(0)))\\
&=u(\gamma_1\cdot u(\gamma_2 \dots \gamma_{s-1}\cdot u(\gamma_s)))\label{zzz}\\
&=u((4\gamma_1)\cdot (1/4)\cdot u((4\gamma_2)\cdot (1/4)\dots \gamma_{s-1}\cdot u((4\gamma_s)\cdot (1/4))))\\
&\leq 4^s \delta_s U_s.\label{aaaa}
\end{align} 
The three equalities above are straightforward and the last inequality follows from the parts $(b)$ and $(c)$ of Proposition \ref{ccc}. Since $1-\cos{x}\leq x^2/2$ for all $x\in[0,\infty)$, we have $$U_s=(1-\cos{(\pi/2^s)})/2\leq (4^{-s}\pi^2)/4.$$ Using this and \eqref{aaaa}, the right part of \eqref{bbb} follows. Using $(c)$ in Proposition \ref{ccc}, it is elementary to see that $\delta_s\leq u(\gamma_1\cdot u(\gamma_2 \dots \gamma_{s-1}\cdot u(\gamma_s)))$. This and \eqref{zzz} gives the left part of \eqref{bbb}.
\end{proof}
The next lemma will allow us to find a lower bound for $M_n(\rho(K(\gamma)))$.

\begin{lemma}\label{roro}
Let $\gamma=(\gamma_k)_{k=1}^\infty$ be given. Then for any choice of $g_i\in\{u,1-u\}$, $i=1,\dots, n$, we have $$\inf_{ \tilde{t}\in\{0,\gamma_n\}}|g_1(\gamma_1\cdot g_2( \dots \gamma_{n-1}\cdot g_n(\tilde{t})))-g_1(\gamma_1\cdot g_2( \dots \gamma_{n-1}\cdot g_n(\gamma_n/2)))|\geq$$ 
\begin{equation}\label{kkk}
\geq u(\gamma_1\cdot u(\gamma_2\cdot\dots \gamma_{n-1}\cdot u(\gamma_n/2)))\geq l_{1,n+1}\geq \delta_{n+1}.
\end{equation}

\end{lemma}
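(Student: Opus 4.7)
The plan is to prove the first (chain-initial) inequality by induction on the composition depth, using a ``superadditivity'' property of $u$ to control how the gap between the two nested compositions propagates outwards, and then to derive the tail inequalities $\geq l_{1,n+1} \geq \delta_{n+1}$ from Lemma \ref{yty} together with a short comparison of innermost arguments.

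First I fix $\tilde t \in \{0, \gamma_n\}$ and any choice of $g_i \in \{u, 1-u\}$, and introduce $\alpha_n := g_n(\tilde t)$, $\beta_n := g_n(\gamma_n/2)$, together with the recursions $\alpha_k := g_k(\gamma_k \alpha_{k+1})$ and $\beta_k := g_k(\gamma_k \beta_{k+1})$ for $k = n-1, \ldots, 1$. Since each $g_i$ maps $[0,1/4]$ into $[0,1]$ and $\gamma_k < 1/4$, every argument $\gamma_k \alpha_{k+1}$ and $\gamma_k \beta_{k+1}$ stays in $[0, 1/4]$, so all evaluations of $u$ and $1-u$ are in their domain. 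The goal is to show $|\alpha_1 - \beta_1| \geq u(\gamma_1 u(\gamma_2 \cdots u(\gamma_{n-1} u(\gamma_n/2))))$ uniformly in $\tilde t$ and the $g_i$.

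For the base case: regardless of whether $g_n = u$ or $g_n = 1-u$, one computes $|\alpha_n - \beta_n| = u(\gamma_n/2)$ when $\tilde t = 0$ and $|\alpha_n - \beta_n| = u(\gamma_n) - u(\gamma_n/2)$ when $\tilde t = \gamma_n$. Strict convexity of $u$ with $u(0)=0$ (part (a) of Proposition \ref{ccc}) gives $u(\gamma_n) \geq 2 u(\gamma_n/2)$, so $|\alpha_n - \beta_n| \geq u(\gamma_n/2)$ in either case. For the inductive step, since $|1-u(a) - (1-u(b))| = |u(a) - u(b)|$, $|\alpha_{k-1} - \beta_{k-1}| = |u(\gamma_{k-1}\alpha_k) - u(\gamma_{k-1}\beta_k)|$; assuming without loss of generality $\alpha_k \geq \beta_k$ and writing $\gamma_{k-1}\alpha_k = \gamma_{k-1}\beta_k + \gamma_{k-1}(\alpha_k - \beta_k)$, the crucial estimate is the superadditivity
\[
u(x+d) - u(x) \geq u(d), \qquad 0 \leq x,\, d,\, x+d \leq 1/4.
\]
This follows at once from part (a) of Proposition \ref{ccc}: the derivative of $x \mapsto u(x+d) - u(x)$ is $u'(x+d) - u'(x) \geq 0$ since $u'$ is increasing, so the map is nondecreasing in $x$ and hence bounded below by its value at $x=0$, which is $u(d)$. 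Applied with $x = \gamma_{k-1}\beta_k$ and $d = \gamma_{k-1}(\alpha_k - \beta_k)$, this yields $|\alpha_{k-1} - \beta_{k-1}| \geq u(\gamma_{k-1} |\alpha_k - \beta_k|)$. Chaining from $k=n$ down to $k=1$, using monotonicity of $u$ at each step, gives the first inequality in \eqref{kkk}.

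For the remaining two inequalities, the identity \eqref{zzz} from the proof of Lemma \ref{yty} gives $l_{1,n+1} = u(\gamma_1 u(\gamma_2 \cdots u(\gamma_n u(\gamma_{n+1}))))$. Since $u'(t) = 1/\sqrt{1-4t} \geq 1$ on $[0,1/4]$ with $u(0)=0$, one has $u(\gamma_n/2) \geq \gamma_n/2$, and $\gamma_{n+1} < 1/4$ gives $u(\gamma_{n+1}) < 1/2$, so $\gamma_n u(\gamma_{n+1}) < \gamma_n/2 \leq u(\gamma_n/2)$. Propagating through the outer $u$'s and the $\gamma_j$ factors by monotonicity yields $u(\gamma_1 u(\cdots u(\gamma_n/2))) \geq l_{1,n+1}$, and $l_{1,n+1} \geq \delta_{n+1}$ is exactly the left side of \eqref{bbb}. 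The main obstacle is the superadditivity step in the induction; once that is secured from convexity of $u$, the rest is routine bookkeeping.
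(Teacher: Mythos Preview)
Your proof is correct and follows essentially the same route as the paper's: an induction on the composition depth using the convexity-based superadditivity $u(x+d)-u(x)\geq u(d)$ (which the paper invokes as ``strict convexity'' in its step \eqref{4}), together with the same innermost comparison $\gamma_n u(\gamma_{n+1})<\gamma_n/2$ and Lemma~\ref{yty} for the tail inequalities. The only cosmetic differences are your separate treatment of $\tilde t\in\{0,\gamma_n\}$ in the base case (the paper unifies these via a parameter $c\in[0,\gamma_n/2]$) and the harmless but unnecessary detour through $u(\gamma_n/2)\geq\gamma_n/2$ in the tail argument, where the direct comparison $\gamma_n u(\gamma_{n+1})<\gamma_n/2$ followed by monotonicity of $u$ already suffices.
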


\begin{proof}
Let $n\in\mathbb{N}$ be given. Then $l_{1,n+1}\geq\delta_{n+1}$ by Lemma \ref{yty}. Since $u(t)\leq 1/2$, we have $\gamma_n\cdot u(\gamma_{n+1})\leq {\gamma_n}/2$. Using the part $(c)$ of Proposition \ref{ccc} we see that $$u(\gamma_1\cdot u(\gamma_2 \dots u(\gamma_{n}/2)))\geq u(\gamma_1\cdot u(\gamma_2 \dots \gamma_{n}\cdot u(\gamma_{n+1})))= l_{1,n+1}$$ holds and thus the second inequality in \eqref{kkk} follows. 

In order to prove the first inequality in \eqref{kkk}, it suffices to show that for a given $c\in[0,\gamma_n/2]$, and $g_i\in\{u,1-u\}$, $i=1,\dots,n$, the following inequality holds:
\begin{equation}\label{bubu}
|g_1(\gamma_1 \dots \gamma_{n-1}\cdot g_n(c+\gamma_n/2))-g_1(\gamma_1 \dots \gamma_{n-1}\cdot g_n(c))|\geq u(\gamma_1\cdot u(\gamma_2\cdot\dots \gamma_{n-1}\cdot u(\gamma_n/2))).
\end{equation}

Let $$q_{n+1-k}:=g_{k}(\gamma_{k}\cdot g_{k+1}(\dots \gamma_{n-1}\cdot g_n(c+\gamma_n/2))),$$ $$t_{n+1-k}:=g_{k}(\gamma_{k}\cdot g_{k+1}(\dots \gamma_{n-1}\cdot g_n(c))),$$  $$r_{n+1-k}:=u(\gamma_{k}\cdot u(\gamma_{k+1}\dots \gamma_{n-1}\cdot u(\gamma_n/2))),$$ $$s_k:=|q_k-t_k|,$$ for $k=1,\dots,n$. If $n=1$, $s_n\geq r_n$ since from the strict convexity of $u$ we have 
\begin{equation}\label{kekeke}
|g_1(c+\gamma_1/2)-g_1(c)| =u(c+\gamma_1/2)-u(c)\geq u(\gamma_1/2)-u(0).
\end{equation}
Suppose that $n>1$. We want to show that $s_n\geq r_n$. Let us proceed by induction. For $k=1$, $s_k\geq r_k$ by \eqref{kekeke}. Suppose that the induction hypothesis holds for all $k=1,\dots m$ provided that $m\leq n-1$.   Using, the strict convexity of $u$ in \eqref{4} and the fact that $u$ is increasing in \eqref{6} we have 
\begin{align}
\label{1} s_{m+1}&=|q_{m+1}-t_{m+1}|\\
\label{2}&=|g_{n-m}(\gamma_{n-m}\cdot q_m)-g_{n-m}(\gamma_{n-m}\cdot t_m)|\\
\label{3}&=|u(\gamma_{n-m}\cdot q_m)-u(\gamma_{n-m}\cdot t_m)|\\
\label{4}&\geq  u(\gamma_{n-m}\cdot |q_m-t_m|)\\
\label{5}&=u(\gamma_{n-m}\cdot s_m)\\
\label{6}&\geq u(\gamma_{n-m}\cdot r_m)\\ 
\label{7}&=r_{m+1}.
\end{align}
Hence, $s_n\geq r_n$ holds if we take $m=n-1$ above. This gives \eqref{bubu} and completes our proof.
\end{proof}

Eventually, we are ready to prove the main result of the paper. 

\begin{theorem}\label{last}
Let $\gamma=(\gamma_k)_{k=1}^\infty$ and $n\in\mathbb{N}$ with $n>1$ be given. Furthermore, let $s$ be the integer satisfying $2^{s-1}\leq n < 2^s$. Then 
\begin{equation}\label{eq1}
\delta_{s+2}\leq M_n(\rho_{K(\gamma)})\leq \frac{\pi^2}{4}\cdot \delta_{s-2}
\end{equation}
holds. In particular, if $\displaystyle\inf_{k} \gamma_k=c>0$ then we have
\begin{equation}\label{eq2}
c^2\cdot \delta_{s}\leq M_n(\rho_{K(\gamma)})\leq \frac{\pi^2}{4c^2}\cdot \delta_{s}.
\end{equation}

\end{theorem}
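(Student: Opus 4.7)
The plan is to combine Proposition~\ref{teo1}, Theorem~\ref{tm}, and Lemmas~\ref{rere1}--\ref{roro} into two short chains of inequalities, one for each side of \eqref{eq1}. The key structural input that makes Theorem~\ref{tm} applicable is that $f_l(z)=z^2/(2\gamma_l)+1-1/(2\gamma_l)$ satisfies $f_l'(0)=0$ for every $l>1$, so its hypothesis holds with $s_0=1$. Note also that $n>1$ together with $2^{s-1}\le n<2^s$ forces $s\ge 2$, so every index shift below is legal.

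For the upper bound in \eqref{eq1} the argument is essentially automatic. Since $n\ge 2^{s-1}$, Lemma~\ref{rere} applied with $r=n$ and $k=s-1$ yields $M_n(\rho_{K(\gamma)})\le l_{1,s-2}$, and Lemma~\ref{yty} then gives $l_{1,s-2}\le (\pi^2/4)\,\delta_{s-2}$.

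The lower bound is the place where indices must be coordinated carefully. Since $n<2^s<2^{s+1}$, Proposition~\ref{teo1} with $l=2^{s+1}$ and $m=2^s$ gives
\[
d\bigl(Z_{2^{s+1}}(\rho_{K(\gamma)}),Z_{2^{s}}(\rho_{K(\gamma)})\bigr)\le M_n(\rho_{K(\gamma)}).
\]
Applying Theorem~\ref{tm} with $s_0=1$, $k=s$, $k'=s-1$ (both in $\mathbb{N}_0$ because $s\ge 2$) gives
\[
d\bigl(Z_{2^{s+1}}(\rho_{K(\gamma)}),Y_{2^{s+1}}(\rho_{K(\gamma)})\bigr)\le d\bigl(Z_{2^{s+1}}(\rho_{K(\gamma)}),Z_{2^{s}}(\rho_{K(\gamma)})\bigr),
\]
and Lemma~\ref{rere1} followed by Lemma~\ref{roro}, with the dummy index $n$ there set to $s+1$, yields
\[
d\bigl(Z_{2^{s+1}}(\rho_{K(\gamma)}),Y_{2^{s+1}}(\rho_{K(\gamma)})\bigr)\ge l_{1,s+2}\ge \delta_{s+2}.
\]
Concatenating these three inequalities produces $\delta_{s+2}\le M_n(\rho_{K(\gamma)})$.

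Finally, \eqref{eq2} is pure arithmetic: under $\inf_k\gamma_k=c>0$ we have $\delta_{s+2}=\delta_s\,\gamma_{s+1}\gamma_{s+2}\ge c^2\delta_s$ and $\delta_{s-2}=\delta_s/(\gamma_{s-1}\gamma_s)\le \delta_s/c^2$, so \eqref{eq2} follows immediately from \eqref{eq1}. The main obstacle here is not a substantive one but a bookkeeping one: one must pass \emph{one level deeper} than $n$ (to $2^{s+1}$) to invoke Theorem~\ref{tm} on the lower side, and \emph{one level shallower} than $n$ (to $2^{s-1}$) to invoke Lemma~\ref{rere} on the upper side, which is precisely the source of the indices $s+2$ and $s-2$ — rather than $s\pm 1$ — in \eqref{eq1}.
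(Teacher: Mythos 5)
Your proof is correct and follows essentially the same chain as the paper's: Proposition~\ref{teo1} and Theorem~\ref{tm} bridge $M_n$ down to $d(Z_{2^{s+1}},Y_{2^{s+1}})$, Lemmas~\ref{rere1} and~\ref{roro} supply the lower bound $\delta_{s+2}$, and Lemmas~\ref{rere} and~\ref{yty} supply the upper bound. The only cosmetic difference is in the parametrization of Theorem~\ref{tm} (the paper uses $s_0=s$, $k=1$, $k'=0$ whereas you use $s_0=1$, $k=s$, $k'=s-1$), which yields the identical inequality at levels $2^{s+1}$ and $2^s$.
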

\begin{proof}
First, let us prove \eqref{eq1}. Recall that for all $m>1$, $f^\prime_m(0)=0$ and $d_i=2$ for all $i\in\mathbb{N}$. Using Theorem \ref{tm} for $\mu=\rho_{K(\gamma)}$, $s_0=s$, $k=1$ and $k^\prime=0$, we have 
\begin{equation*}\label{trt}
 d(Z_{2^{s+1}}(\rho_{K(\gamma)}),Y_{2^{s+1}}(\rho_{K(\gamma)}))\leq d(Z_{2^{s+1}}(\rho_{K(\gamma)}),Z_{2^{s}}(\rho_{K(\gamma)})).
\end{equation*}
By using Proposition \ref{teo1}, for $\mu=\rho_{K(\gamma)}$, $l=2^{s+1}$, $m=2^{s}$, it can be seen that
\begin{equation*}\label{trt2}
d(Z_{2^{s+1}}(\rho_{K(\gamma)}),Z_{2^s}(\rho_{K(\gamma)}))\leq M_n(\rho_{K(\gamma)}).
\end{equation*}
Hence $d(Z_{2^{s+1}}(\rho_{K(\gamma)}),Y_{2^{s+1}}(\rho_{K(\gamma)}))\leq M_n(\rho_{K(\gamma)})$ holds. By Lemma \ref{rere1} and Lemma \ref{roro}, the term on the left part of this last inequality is bounded below by $\delta_{s+2}$. This gives the first inequality in \eqref{eq1}. 

Using Lemma \ref{rere} for $r=n$ and $k=s-1$ and then Lemma \ref{yty}, we deduce that $$M_n(\rho_{K(\gamma)})\leq l_{1,s-2}\leq(\pi^2/4) \delta_{s-2}.$$ This completes the proof of \eqref{eq1}.

Combining the first inequality of \eqref{eq1} and the fact that $c^2\leq\gamma_{s+1}\cdot \gamma_{s+2}$, the first inequality in \eqref{eq2} follows. Observe that $1/c^2\geq 1/(\gamma_{s-1}\cdot\gamma_{s-2})$. Hence, the second inequality in \eqref{eq1} implies that of \eqref{eq2}. So, we are done.
\end{proof}

\begin{remark}
If there is a $c$ with $0< c\leq\gamma_k\leq 1/6$ for all $k\in\mathbb{N}$, then $\rho_{K(\gamma)}$ is purely singular continuous. Moreover \eqref{eq2} is satisfied and $|K(\gamma)|=0$ holds. 

If $\gamma=(\gamma_k)_{k=1}^\infty$ satisfies $\sum_{k=1}^\infty \sqrt{(1-4\gamma_k)}<\infty$ then there is a $c$ such that $0<c \leq\gamma_k<1/4$ holds for all $k\in\mathbb{N}$ and $K(\gamma)$ is a Parreau-Widom set (see e.g. \cite{christiansen} for the definition) by \cite{alp2}. Thus, (see \cite{christiansen,polto} for the proof) $\rho_{K(\gamma)}$ and the Lebesgue measure restricted to $K(\gamma)$ are mutually absolutely continuous . Yet, \eqref{eq2} gives a pretty accurate description of $(M_n(\rho_{K(\gamma)}))_{n=2}^\infty$.

For a $\gamma=(\gamma_k)_{k=1}^\infty$ with $\sum_{k=1}^\infty \gamma_k< \infty$, $\rho_{K(\gamma)}$ and a special Hausdorff measure $\Lambda_h$ defined in \cite{ag} are mutually absolutely continuous. In this case, $K(\gamma)$ is of Hausdorff dimension zero (can be seen by using 2.3 in \cite{ag}) and we only have \eqref{eq1}.
\end{remark}
The term $\delta_n$ plays an important role to characterize the smoothness properties of the Green function $G_{\overline{\mathbb{C}}\setminus K(\gamma)}$ with pole at infinity (see Section 5 in \cite{alp2} and \cite{gonc2}). For an overview of the smoothness properties of the Green function for the complement of homogeneous Cantor sets, we refer the reader to \cite{tot}. It is unclear that whether there is, in general, a meaningful relation between the spacing properties of orthogonal polynomials for $\rho_K$ and the supremum of the H\"{o}lder exponents making $G_{\overline{\mathbb{C}}\setminus K}$ H\"{o}lder continuous  provided that $K\subset\mathbb{R}$ is a non-polar compact set.

It seems that similar results to Theorem \ref{last} can be obtained for the equilibrium measure of the Julia set $J(f)\subset \mathbb{R}$ of a quadratic polynomial of the form $f(z)=z^2-c$ with $c>2$. If we let $f_n\equiv f$ for all $n\in\mathbb{N}$ then by \cite{Barnsley1}, $F_{n}(z)=P_{2^n}(z;\rho_{J(f)})$ and $f_{n}^\prime(0)=0$ holds. Besides, the inverse branches of $f$ are $f_{\pm}(t)=\pm \sqrt{t+c}$ and possibly they lead to similar calculations.

\end{document}